\newtheorem{theorem}{Theorem}
\newtheorem{lemma}[theorem]{Lemma}
\newtheorem{proposition}[theorem]{Proposition}
\newtheorem{question}[theorem]{Question}
\author{Boris Bukh\footnote{\texttt{B.Bukh@dpmms.cam.ac.uk}.
Centre for Mathematical Sciences,
Cambridge CB3 0WB, England
and Churchill College, Cambridge CB3 0DS, England.}}
\title{Radon partitions in convexity spaces%
\footnote{The paper is in public domain, and is not protected by copyright.}}
\date{}
\newcommand*{\R}{\mathbb{R}}                                   
\newcommand*{\F}{\mathcal{F}}                                  
\newcommand*{\T}{\mathcal{T}}                                  
\newcommand*{\bF}{\boldsymbol{\mathcal{F}}}                    
\newcommand*{\Nv}{\boldsymbol{\mathcal{N}}}                    
\newcommand*{\abs}[1]{\lvert #1\rvert}                         
\DeclareMathOperator{\conv}{conv}                              
\newcommand*{\veps}{\varepsilon}
\newcommand*{\eqdef}{\stackrel{\text{\tiny def}}{=}}
\begin{document}
\maketitle

\begin{abstract}
Tverberg's theorem asserts that every $(k-1)(d+1)+1$ points
in $\R^d$ can be partitioned into $k$ parts, so that the convex
hulls of the parts have a common intersection. Calder and Eckhoff 
asked whether there is a purely combinatorial deduction of Tverberg's 
theorem from the special case $k=2$. We dash the hopes of a purely 
combinatorial deduction, but show that the case $k=2$ does imply 
that every set of $O(k^2 \log^2 k)$ points admits a Tverberg partition into $k$ parts.
\end{abstract}

\section*{Introduction}
Radon's lemma \cite{radon_orig} states that every 
set $P$ of $d+2$ points in $\R^d$ can be partitioned
into two classes $P=P_1\cup P_2$ so that the convex 
hulls of $P_1$ and $P_2$ intersect.
Birch \cite{birch_plane} (for $d=2$) and Tverberg \cite{tverberg_orig} 
(for general $d$) extended Radon's theorem to
the analogous statement for partitions of a 
set into more than two parts: For a set $P\subset \R^d$
of $\abs{P}\geq (k-1)(d+1)+1$ points 
there is a partition
$P=P_1\cup\dotsb \cup P_k$ into $k$ parts, such that the 
intersection of the convex hulls
$\conv P_1\cap\dotsb\cap \conv P_k$ is non-empty. 
The bound of $(k-1)(d+1)+1$ is sharp, as
witnessed by any set of points in sufficiently general position.

Calder \cite{calder} conjectured and Eckhoff 
\cite{eckhoff_survey_orig} speculated that
Tverberg's theorem is a consequence of Radon's 
theorem in the context of abstract 
convexity spaces. The conjecture, which we now present, 
is commonly referred as 
``Eckhoff's conjecture'', and we will maintain 
this tradition to avoid additional confusion.
If true, the conjecture would have 
provided a purely combinatorial proof of Tverberg's theorem.
However, we will show that the conjecture is false.

A \emph{convexity space} on the ground set $X$ 
is a family $\F\subset 2^X$ of subsets
of $X$, called \emph{convex sets}, 
such as both $\emptyset$ and $X$ are convex, and intersection
of any collection of convex sets is convex. 
For example, the familiar convex sets in $\R^d$
form a convexity space on $\R^d$. Among the
other examples are axis-parallel boxes in $\R^d$, finite
subsets on any ground set, closed sets 
in any topological space (see the book \cite{vandevel} for
a through overview of convexity spaces). If 
the ground set $X$ in the convexity space $(X,\F)$ 
is clear from the context, we will speak simply 
of a convexity space $\F$. 
The \emph{convex hull} of a set $P\subset X$, 
denoted $\conv P$, is the intersection of all the convex sets
containing $P$. We write $\conv_{\F} P$ if the convexity
space is not clear from the context. The $k$-th \emph{Radon number} 
of $(X,\F)$ is the minimum 
natural number $r_k$, if it exists, so that every 
set $P\subset X$ of at least $r_k$ points admits a 
partition $P=P_1\cup\dotsb\cup P_k$ into $k$ 
parts whose convex hulls have an element in common.
It is not hard to show\footnote{According to \cite{eckhoff_survey} 
it was first shown by R.E.Jamison (1976).
The first published proofs appear to be 
in \cite{doignon_reay_sierksma} and \cite{jamison_r3}.}
that if $r_2$ is finite, then so is $r_k$.  Eckhoff's 
conjecture states that $r_k\leq (k-1)(r_2-1)+1$ in every 
convexity space. The conjecture has been proved for $r_2=3$ 
by Jamison \cite{jamison_r3}, and for convexity space 
with at most $2r_2$ points by Sierksma and Boland \cite{sierksma_boland}. 
In section \ref{sec_r23} we reproduce a version of Jamison's proof.

The best bounds on $r_k$ are
\begin{align*}
r_{k_1k_2}&\leq r_{k_1}r_{k_2}\qquad&\text{(due to Jamison \cite{jamison_r3})},\\
r_{2k+1}&\leq (r_2-1)(r_{k+1}-1)+r_k+1\qquad&\text{(due to Eckhoff \cite{eckhoff_survey})}.
\end{align*}
In particular,
\begin{equation}\label{prevbest}
r_k\leq k^{\lceil\log_2 r_2\rceil}.
\end{equation}

The following result improves on \eqref{prevbest}.
\begin{theorem}\label{thmklogk}
Let $(X,\F)$ be a convexity space, and assume that 
$r_2$ is finite. Then
\begin{equation*}
r_k\leq c(r_2) k^2\log^2 k,
\end{equation*}
where $c(r_2)$ is a constant that depends only on $r_2$.
\end{theorem}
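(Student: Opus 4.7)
My plan is to derive a weak $\veps$-net theorem for $(X,\F)$ from $r_2<\infty$, and then to apply it with $\veps$ of order $1/k$ to build a Tverberg partition organized around a single ``deep'' point $x\in X$.

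\paragraph{Weak $\veps$-nets from $r_2<\infty$.}
The first step is to go from $r_2<\infty$ to a finite Helly number $h\leq r_2-1$ via Levi's theorem, and from there to a quantitative fractional Helly property for $(X,\F)$, obtained by counting how Radon partitions of $h$-tuples of convex sets force many further $h$-tuples to intersect. Once fractional Helly is available with explicit constants depending on $r_2$, the Alon--Kalai--Matou\v sek--Meshulam scheme converts it into weak $\veps$-nets of polynomial size: for every finite $P\subset X$ and every $\veps>0$ there exists a set $\Nv(\veps)\subset X$ of size $m(\veps,r_2)$ such that $\Nv(\veps)\cap\conv S\neq\emptyset$ for every $S\subset P$ with $\abs S>\veps\abs P$.

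\paragraph{Deep point and greedy extraction.}
Fix $\veps$ of order $1/k$ and apply the previous step to $P$. A pigeonhole argument over $\Nv(\veps)$ and a suitable family of large subsets of $P$ singles out a point $x\in\Nv(\veps)$ that lies in $\conv S$ for many subsets $S$. Assemble the Tverberg partition greedily: at step $i$, having already produced disjoint $P_1,\dots,P_{i-1}$ each containing $x$ in its convex hull, extract $P_i$ of bounded size from the residual $R_i=P\setminus(P_1\cup\dots\cup P_{i-1})$ with $x\in\conv P_i$. The size requirement $\abs P\geq c(r_2) k^2\log^2 k$ is calibrated so that $\abs{R_i}>\veps\abs P$ is maintained at every step, which keeps $x\in\conv R_i$ and lets the extraction continue through all $k$ rounds.

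\paragraph{Main obstacle.}
The substantive work is the quantitative weak $\veps$-net theorem in the abstract setting, since the usual geometric crutches--halfspaces, separation theorems, barycentric coordinates--are unavailable, and everything must flow from fractional Helly together with Radon. Once the net is in hand, locating $x$ and carving off $k$ disjoint pieces is a routine counting exercise. The exponent $k^2\log^2 k$ in the theorem arises from balancing the polynomial-with-polylogarithmic-correction size of the weak $\veps$-net against $\veps\asymp 1/k$ and against the $k$-fold cost of successive extractions.
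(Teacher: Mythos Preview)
Your approach is entirely different from the paper's and, as written, has a genuine gap at its core. The paper poses the existence of weak $\veps$-nets for $\veps<1-1/(r_2-1)$ as an \emph{open question} (Question~\ref{weak_question}); you are assuming precisely what the author does not know how to prove. The route you sketch---fractional Helly from $r_2<\infty$, then weak nets via the Alon--Kalai--Matou\v sek--Meshulam machinery---does eventually work, but the fractional Helly step for abstract convexity spaces from the Radon number alone is itself a substantial theorem (Holmsen--Lee, 2021) that postdates this paper; it is not the routine ``counting how Radon partitions of $h$-tuples force many further $h$-tuples to intersect'' that you describe.

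Even granting weak nets, the greedy extraction does not go through as stated. A weak $\veps$-net of size $m$ gives, via pigeonhole, a point $x$ lying in $\conv S$ for \emph{many} large $S$; your greedy step requires $x\in\conv R_i$ for the \emph{specific} residuals $R_i$ produced along the way, i.e., a centrepoint at level $\veps\asymp 1/k$, which is far stronger than anything a net provides. You also need to extract a bounded-size $P_i\subset R_i$ with $x\in\conv P_i$; that is a Carath\'eodory-type statement, and a bounded Carath\'eodory number does not follow from $r_2<\infty$ in general convexity spaces. The paper's proof avoids all of this: it works directly with the nerve $\Nv(P)$, uses a multidimensional Kruskal--Katona inequality (Lemma~\ref{kk_lemma}) to show that a certain $(2r_2-3)$-dimensional family of tuples is dense, finds a large clique in an auxiliary hypergraph via a Tur\'an bound, extracts $k$ pairwise disjoint sets using a Ramsey-type lemma of Alon--Sudakov, and finally shows by induction---repeatedly applying the Radon property~\ref{Nv_rt}---that some single family in $\Nv$ contains all $k$ of them.
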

Though this bound is not far from Eckhoff's 
conjecture, the conjecture itself is false.
\begin{theorem}\label{thmcounterexample}
For each $k\geq 3$ there is a convexity space $(X,\F)$ such 
that $r_2=4$, but $r_k\geq 3(k-1)+2$.
\end{theorem}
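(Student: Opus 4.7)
I aim to construct, for each $k \geq 3$, an explicit convexity space on a ground set $X$ of $3k-2$ points with $r_2 = 4$ but no Tverberg $k$-partition of $X$, proving $r_k \geq 3k-1$. My strategy is first to reduce the problem to a single partition shape and then to design convex hulls that simultaneously obstruct Tverberg and preserve Radon on $4$-subsets.

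\textbf{Structural reduction.} Declaring every pair convex (so $\conv\{x,y\}=\{x,y\}$) forces the hulls of distinct parts of size $\leq 2$ to be disjoint. Combined with $3k-2 = 3(k-1)+1$, a short case analysis then shows the only admissible Tverberg partition shape is $(3,\dots,3,1)$: a singleton $\{a\}$ plus $k-1$ triples. Hence it suffices to show that, for every apex $a \in X$, the $3$-uniform hypergraph $\T_a := \{T \in \binom{X\setminus\{a\}}{3} : a \in \conv T\}$ has no perfect matching on $X \setminus \{a\}$.

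\textbf{Construction.} For each $a$ I include in $\F$ convex ``blocker'' sets $H \subsetneq X$ with $a \notin H$: any triple $T \subset H$ has $\conv T \subset H$ and is excluded from $\T_a$. I choose the blockers so that, for each $a$, some vertex $v_a \in X \setminus \{a\}$ is trapped by blockers of $a$---every triple in $X\setminus\{a\}$ through $v_a$ lies inside one of them. Then $v_a$ is an isolated vertex of $\T_a$, and no perfect matching on $X\setminus\{a\}$ can exist. Radon for a $4$-subset $S$ (for which $(2,2)$-partitions are automatically useless) reduces to finding some $e \in S$ with $S \setminus \{e\} \in \T_e$; the blocker system must therefore be sparse enough to leave such a witness for every $4$-subset. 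I expect a cyclic labelling $X = \mathbb{Z}/(3k-2)$ together with a carefully chosen small family of blockers to realize both conditions; verifying that the family is closed under intersection and yields the intended hulls is a routine but necessary final check.

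\textbf{Main obstacle.} The tension between the two conditions is the heart of the proof. Thicker blockers kill more Tverberg partitions but also eliminate Radon witnesses; thinner blockers maintain Radon but lose the $v_a$-isolation needed to block Tverberg. Symmetric choices such as $H_a = X \setminus \{a, a+1\}$ make each $\T_a$ a ``sandwich family'' with matching number at most one (good), but leave $4$-subsets of arithmetic-progression type (such as $\{0,2,4,6\}$) without any Radon witness (bad). The final construction must patch these residual $4$-subsets by enriching the blocker family or by modifying the cyclic structure, while preserving $v_a$-isolation in every $\T_a$ and keeping $\F$ closed under intersection---a delicate balancing act that constitutes the technical core of the argument.
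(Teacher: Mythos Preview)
Your proposal is a plan, not a proof: the central construction---the explicit family of blockers realizing both the Radon and the anti-Tverberg conditions---is never produced. You yourself call this ``the technical core of the argument'' and leave it open, noting that the natural cyclic candidate $H_a=X\setminus\{a,a+1\}$ already fails on arithmetic-progression $4$-sets. Without a construction that survives the check, nothing has been established beyond the (correct) structural reduction to the partition shape $(3,\dots,3,1)$.

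More substantively, your framework diverges from the paper's in a way that may make the gap genuinely hard to close. By declaring all pairs convex, you force every Radon partition of a $4$-set to be of type $(1,3)$: some $e\in S$ must lie in $\conv(S\setminus\{e\})$. The paper does the opposite. It works through nerves (Lemma~\ref{nervetospace}), builds the convexity space on a ground set $X=\Nv$ that is much larger than the $(3k-2)$-point set $P$, and introduces the families $B[xy{:}zw]$ precisely to supply $(2,2)$ Radon witnesses: $\{x,y\},\{z,w\}\in B[xy{:}zw]$ certifies $\conv\{x,y\}\cap\conv\{z,w\}\neq\emptyset$ via an auxiliary point of $X\setminus P$. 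In the paper's case analysis these $(2,2)$ witnesses are exactly what handle the $4$-sets consisting of four $\hat A$-points, which admit no $(1,3)$ partition at all. In your setup such $4$-sets, if they arise, are fatal---and the arithmetic-progression example you flag is this same phenomenon.

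So the obstacle you identify is not merely ``delicate balancing'' but a structural cost of the pairs-are-convex ansatz: you must provide a $(1,3)$ Radon witness for \emph{every} $4$-subset while simultaneously isolating a vertex in every $\T_a$ and keeping $\F$ intersection-closed. The paper sidesteps all three constraints at once by enlarging $X$ and working abstractly in the nerve; if you wish to pursue your direct route, you should either exhibit the blocker family explicitly and verify all three conditions, or abandon $\conv\{x,y\}=\{x,y\}$ and allow auxiliary points so that $(2,2)$ partitions become available.
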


Despite the failure of Eckhoff's conjecture, we have been unable to 
rule out that the convexity spaces with finite $r_2$ 
might behave similarly to Euclidean spaces. It is conceivable 
that $r_k$ is bounded by a linear function of $k$ for each $r_2$.
Moreover, it is possible that other results from combinatorial 
convexity extend to such spaces.
For instance, Radon proved the lemma now bearing his 
name to give an alternative proof of Helly's theorem that if in 
some family of convex sets in $\R^d$ every $d+1$ sets intersect, 
then all of them do.
One of the easy but startling consequences of 
Helly's theorem is the centrepoint theorem. The centrepoint 
theorem asserts that
for every finite set $P\subset \R^d$ there is a point $p\in \R^d$ (the ``centrepoint'') such that every convex set containing more 
than $\frac{d}{d+1}\abs{P}$ points of $P$ also contains $p$.
Both the deduction of Helly's theorem from Radon's 
theorem, and the deduction of centrepoint theorem from
Helly's theorem remain valid in the context of the 
convexity spaces with finite $r_2$.
This prompts the following question:
\begin{question}[Weak epsilon-nets]\label{weak_question}
Suppose $(X,\F)$ is a convexity space 
with finite $r_2$. Let $\veps>0$ be given.
Let $P\subset X$ be a set of points in the space. 
Is there a set $N$ of $\abs{N}\leq f(\epsilon,r_2)$ points 
such that every convex set $S$ containing more 
than $\veps \abs{P}$ points of $P$ also contains
at least one point of $N$?
\end{question}
The set $N$ as in the question above is 
called a \emph{weak $\veps$-net} (with respect to convex sets) for $P$. 
In $\R^d$ it is known that there are weak $\veps$-nets of 
size only $(1/\veps)^d \log^{c_d} (1/\veps)$.
The discussion above shows that the answer to 
the question is positive if $\epsilon>1-1/(r_2-1)$.
It is unclear whether the weak $\veps$-nets 
of size $f(\veps,r_2)$ exist for any $\epsilon<1-1/(r_2-1)$.

B\'ar\'any \cite{barany} showed that if $P$ is an $n$-point set in $\R^d$, then 
there is a point $p$ in $c_d \binom{n}{d+1}$ 
of all the $\binom{n}{d+1}$ simplices spanned by $P$, 
where $c_d$ is a positive constant that depends 
only on $d$. In $\R^1$, it is immediate 
that $c_1=1/2$ is admissible, and is best 
possible. The situation for convexity spaces 
with bounded $r_2$ is again unclear, except if $r_2=3$:
\begin{proposition}[Selection theorem]\label{prop_r23sel}
Let $(X,\F)$ be a space with $r_2=3$. 
Let $P\subset X$ be point set.
Then there is a point $p\in X$ that is 
contained in at least $\frac{1}{3}\binom{n}{2}+O(n)$ of
all the sets $\conv \{x,y\}$.
\end{proposition}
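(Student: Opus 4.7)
The plan is a short double-counting argument exploiting the defining feature of $r_2=3$: for any three distinct points $\{a,b,c\}\subseteq P$, the only possible Radon partition is of type $1+2$, so (using the standard convention that singletons are convex, $\conv\{a\}=\{a\}$) one of the three points lies in the convex hull of the other two.

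For each pair $\{x,y\}\in\binom{P}{2}$ set $s(x,y):=\abs{P\cap\conv\{x,y\}}$, which is at least $2$ since $\{x,y\}\subseteq\conv\{x,y\}$. Summing the Radon observation over all $\binom{n}{3}$ triples of $P$ and reindexing by pairs gives
\begin{equation*}
\binom{n}{3}\;\leq\;\sum_{\{x,y\}\in\binom{P}{2}}\bigl(s(x,y)-2\bigr),
\end{equation*}
hence $\sum_{\{x,y\}}s(x,y)\geq \binom{n}{3}+2\binom{n}{2}$. Switching the order of summation, the left-hand side also equals $\sum_{p\in P}\abs{\{\{x,y\}:p\in\conv\{x,y\}\}}$, so averaging over $p\in P$ produces some point contained in at least
\begin{equation*}
\frac{1}{n}\Bigl(\binom{n}{3}+2\binom{n}{2}\Bigr)=\frac{(n-1)(n+4)}{6}=\tfrac{1}{3}\binom{n}{2}+O(n)
\end{equation*}
of the hulls $\conv\{x,y\}$, which is the claimed estimate.

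The argument is entirely elementary, and I do not foresee a real obstacle. The only step that merits scrutiny is the opening observation that a point of each triple actually lies in the convex hull of the other two: this relies on singletons being convex. If one only uses the bare Radon condition $\conv\{a\}\cap\conv\{b,c\}\neq\emptyset$, each triple still furnishes a witness $q\in X$ lying in some pair's convex hull, but different triples may yield different witnesses, so a small additional argument (for instance, consolidating witnesses via the centrepoint theorem inherited from $r_2$ finite) would be needed to recover a single heavily-covered point.
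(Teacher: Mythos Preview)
Your argument is essentially the paper's proof, spelled out in more detail: the paper also double-counts incidences $(p,\{q,r\})$ with $p\in\conv\{q,r\}$, observes that each of the $\binom{n}{3}$ triples contributes at least one such incidence by $r_2=3$, and then applies the pigeonhole principle over $p\in P$. The paper makes the same implicit use of the singleton-convex assumption that you flagged.
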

\begin{question}
Does the preceding proposition hold with $1/2$ in place of $1/3$?
\end{question}
The standard greedy argument of Alon, B\'ar\'any, F\"uredi, Kleitman
\cite[Section~8]{abfk} shows that the selection theorem implies an affirmative
answer to Question~\ref{weak_question}. In particular, it gives
$f(\epsilon,3)\leq O\bigl((1/\epsilon)^2\bigr)$, which is probably not sharp.

The rest of the paper is organized as follows. 
In section \ref{sec_nerves} we introduce our only 
technical tool, the nerves of convex sets. 
In lemma~\ref{nervetospace} we will show that the 
nerves encode all the information 
about the convexity space that we 
need. In section \ref{sec_counterexample} we present a counterexample 
to Eckhoff's conjecture. It is then followed in section \ref{sec_klogk}
by the proof of Theorem~\ref{thmklogk}. 
We conclude the paper with a short 
discussion of convexity spaces with $r_2=3$.

\section{Nerves}\label{sec_nerves}
Let $P$ be a set of points in a some convexity 
space. We associate to $P$ a collection $\Nv(P)$
of subsets of $2^P$. A family 
$\F\subset 2^P$ belongs to $\Nv(P)$ if and 
only if the intersection
$\bigcap_{S\in \F} \conv S$ is 
non-empty. In the conventional terminology one would say that 
the collection $\Nv(P)$ is the 
nerve of the family of convex sets $\{\conv S : S\subset P\}$. 
Since we will not use the 
nerves of any other families of sets, in this paper 
we abuse the language and say that $\Nv(P)$ is the nerve of $P$.

\begin{proposition}\label{nervechar}
If $\Nv=\Nv(P)$, then $\Nv$ satisfies the following properties:
\renewcommand{\theenumi}{(N\arabic{enumi})}
\renewcommand{\labelenumi}{\theenumi}
\begin{enumerate}
\item \label{Nv_downset} $\Nv$ is a downset: if $\F\in \Nv$ and 
$\F'\subset \F$, then
$\F'\in \Nv$.
\item \label{Nv_complete} If $\F$ is in $\Nv$, then so is 
$\hat{\F}\eqdef\{S' : S'\supset S\in  \F\}$.
\item \label{Nv_point} For every $p\in P$ the family $\F(p)\eqdef\{ S : p\in S \}$ is in $\Nv$.
\item \label{Nv_partition} The set $P$ can be partitioned into $k$ parts $P=P_1\cup\dotsb\cup P_k$ 
so that $(\conv P_1)\cap\dotsb\cap(\conv P_k)\neq \emptyset$ if and only
if there is a family $\{P_1,\dotsc,P_k\}\in \Nv$ consisting of $k$
disjoint sets.
\item \label{Nv_rt} If $r_t$ exists, then for every set of $r_t$ families 
$\bF=\{\F_1,\dotsc,\F_{r_t}\}\subset \Nv$
there is a partition $\bF=\bF_1\cup\dotsb \cup\bF_t$ of $\bF$ into $t$ parts so
that $(\bigcap \bF_1)\cup \dotsb\cup (\bigcap \bF_t)\in \Nv$.
\end{enumerate}
\end{proposition}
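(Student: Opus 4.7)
Each of (N1)--(N5) follows by unwinding the definition that $\F\in\Nv(P)$ means $\bigcap_{S\in\F}\conv S\neq\emptyset$. For (N1), the intersection only grows under passage to a subfamily, so non-emptiness is preserved. For (N2), $S'\supset S$ gives $\conv S'\supset\conv S$, so any common point for $\F$ is also a common point for every superset, hence for $\hat{\F}$. For (N3), $p$ itself belongs to $\conv S$ for every $S\ni p$, giving $p$ as a witness for $\F(p)$. For (N4), the forward implication is definitional; for the converse, absorb any leftover points $P\setminus(P_1\cup\dotsb\cup P_k)$ into $P_1$, using that enlarging $P_1$ can only enlarge $\conv P_1$ and so preserves membership in $\Nv$.

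The substance lies in (N5). The plan is to reduce the nerve-level statement to a Radon partition in the ambient space $X$. For each $\F_i\in\bF$, pick a representative point $p_i\in\bigcap_{S\in\F_i}\conv S$, which exists precisely because $\F_i\in\Nv$. Apply the definition of $r_t$ to $p_1,\dotsc,p_{r_t}$ to obtain an index partition $\{1,\dotsc,r_t\}=I_1\cup\dotsb\cup I_t$ together with a common point $q\in\bigcap_j\conv\{p_i:i\in I_j\}$. Set $\bF_j=\{\F_i:i\in I_j\}$. Then for every $S\in\bigcap\bF_j$, the set $S$ lies in each $\F_i$ with $i\in I_j$, whence $p_i\in\conv S$ for all such $i$, so $\conv\{p_i:i\in I_j\}\subset\conv S$ and $q\in\conv S$. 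Taking the union over $j$ shows that $q$ is a common point of $\conv S$ for every $S\in\bigcup_j\bigcap\bF_j$, proving membership in $\Nv$.

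The only genuinely delicate point is that the representatives $p_1,\dotsc,p_{r_t}$ may coincide, whereas the definition of the Radon number formally concerns sets of distinct points. I expect this to be the main (and only) obstacle in the proof. The natural workaround is a short separate observation that the Radon property extends to multisets of $r_t$ labelled points---repeated values can only help, since a common point is forced as soon as two equal representatives are placed in different classes---after which the argument above proceeds verbatim.
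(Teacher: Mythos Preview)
Your plan coincides with the paper's proof: it declares (N1)--(N4) immediate and proves (N5) by choosing witnesses $q_i\in\bigcap_{S\in\F_i}\conv S$, applying the definition of $r_t$ to the $q_i$, and pulling the resulting partition back to $\bF$. You are in fact more careful than the paper---you spell out the ``absorb the leftover points'' step for the converse of (N4), and you flag the possibility that the witnesses may coincide (which the paper silently ignores); just note that your one-line justification ``a common point is forced as soon as two equal representatives land in different classes'' settles only $t=2$, so for general $t$ the multiset extension needs a short separate argument.
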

\begin{proof}
The first properties four properties are immediate from the definition
of $\Nv(P)$.

The final property is easy too: Suppose
$\bF=\{\F_1,\dotsc,\F_{r_t}\}$ is given. Let $q_i$ be any point in
$\bigcap_{S\in\F_i} \conv S$. The set $Q=\{q_1,\dotsc,q_{r_t}\}$
of $r_t$ points can be partition into $t$ 
parts $Q=Q_1\cup\dotsb\cup Q_t$ so that
$(\conv Q_1)\cap\dotsb\cap(\conv Q_t)$ is non-empty, thus containing
some point $p$. The partition $Q=Q_1\cup\dotsb\cup Q_t$ naturally
induces the partition $\bF=\bF_1\cup\dotsb\cup \bF_t$. It is easy
to see that the point $q$ belongs to $\bigcap_{S\in \cap \bF_i}\conv S$
for each $i=1,\dotsc,t$. 
\end{proof}

Thanks to the following lemma we can avoid the 
convexity spaces in the rest of the paper, and work exclusively with
nerves.
\begin{lemma}\label{nervetospace}
Let $P$ be a set, and let $\Nv$ be a collection of subsets of $2^P$ that satisfies
the first three properties in the Proposition~\ref{nervechar}. Then there are
a ground set $X\supset P$ and a convexity space on $X$ so that $\Nv(P)=\Nv$.
\end{lemma}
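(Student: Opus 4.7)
The plan is to construct $X$ by adjoining to $P$ a fresh ``witness'' point $x_\G$ for each suitable family $\G \in \Nv$, and to design the convexity so that, for $S \subseteq P$, the witness $x_\G$ belongs to $\conv S$ exactly when $S \in \G$. Then $\bigcap_{S \in \F'} \conv S$ will contain a witness precisely when $\F'$ is contained in some $\G \in \Nv$, and properties \ref{Nv_downset}--\ref{Nv_point} will translate each case back to $\F' \in \Nv$.

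First, I would use \ref{Nv_complete} to restrict attention to the subfamily $\tilde \Nv \subseteq \Nv$ of upward-closed families, since every $\F \in \Nv$ is contained in its upward closure $\hat \F$, which also lies in $\Nv$. Let $X$ be the disjoint union of $P$ with a set $\{x_\G : \G \in \tilde\Nv\}$ of fresh points. I declare a set $C \subseteq X$ to be convex iff, whenever $S \subseteq C \cap P$ and $\G \in \tilde \Nv$ satisfy $S \in \G$, one has $x_\G \in C$. This family is trivially closed under arbitrary intersections and contains $\emptyset$ and $X$, so it is a convexity on $X$.

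Next I would compute $\conv S$ for $S \subseteq P$. Because each $\G \in \tilde \Nv$ is upward-closed, the condition ``some $S' \subseteq S$ lies in $\G$'' simplifies to ``$S \in \G$'', yielding
$$\conv S = S \cup \{x_\G : \G \in \tilde\Nv,\ S \in \G\}.$$
Verifying $\Nv(P) = \Nv$ is then routine. If $\F' \in \Nv$, then $\hat{\F'} \in \tilde \Nv$ and the point $x_{\hat{\F'}}$ lies in $\conv S$ for every $S \in \F'$, so $\F' \in \Nv(P)$. Conversely, any common point of $\{\conv S : S \in \F'\}$ is either some $p \in P$, forcing $\F' \subseteq \F(p) \in \Nv$ by \ref{Nv_point}, or a witness $x_\G$, forcing $\F' \subseteq \G \in \Nv$; in either case \ref{Nv_downset} gives $\F' \in \Nv$.

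The only real obstacle is avoiding spurious witnesses, and the key device is the restriction to upward-closed $\G$: without \ref{Nv_complete}, forcing $x_\G \in \conv S$ would, by monotonicity of $\conv$, drag $x_\G$ into $\conv S'$ for every $S' \supseteq S$, producing intersection witnesses for families strictly larger than $\G$ that need not lie in $\Nv$. A minor edge case worth noting is that $\{\emptyset\} \notin \Nv$ is required for consistency, but this is harmless since in any convexity space the condition $\emptyset \in \F$ forces $\conv \emptyset = \emptyset$, which matches the convention implicit throughout.
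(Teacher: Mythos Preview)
Your argument is correct and follows essentially the same idea as the paper: points of the new ground set are families in $\Nv$, and membership of $S$ in such a family is exactly what places the corresponding point inside $\conv S$. The only cosmetic difference is packaging: the paper takes $X=\Nv$ itself with convex sets $C(\F)=\{\F'\in\Nv:\F\subseteq\F'\}$ and embeds $P$ via $p\mapsto\F(p)$, whereas you keep $P$ literally inside $X$ and adjoin fresh witnesses $x_\G$ only for the upward-closed $\G\in\Nv$; your restriction to upward-closed families is precisely what makes the formula $\conv S=S\cup\{x_\G:S\in\G\}$ clean, and it corresponds to the paper's use of \ref{Nv_complete} in the final equivalence.
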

\begin{proof}
For an arbitrary family $\F$ let $C(\F)=\{\F'\in \Nv : \F\subset \F'\}$,
and denote by $\mathcal{C}$ the family of all the sets of the form $C(\F)$.
Put $X=\Nv$. We claim that $\mathcal{C}$ forms a desired convexity space on $X$.
It is clear that $\emptyset, X\in \mathcal{C}$.
Since $C(\F_1)\cap C(\F_2)=C(\F_1\cup \F_2)$, and similarly for intersections
of more than two sets, the collection $\mathcal{C}$ indeed forms a convexity space on $X$.
Define $\phi\colon P\to X$ by $\phi(p)=\F(p)$. The map $\phi$ is well-defined
by property \ref{Nv_point}, and provides the
embedding of $P$ into $X$. We need to check that $\Nv(\phi(P))=\phi(\Nv)$

Since $\F(p)\in C(\F)$ if and only if $\F\subset \F(p)$, it follows that
$\{\F(p_1),\dots,\F(p_t)\}\subset C(\F)$ precisely when $\F\subset \bigcap \F(p_i)$.
Hence, if $P'\subset P$, then 
\begin{align*}
\conv_{\mathcal{C}}(\phi(P'))=\bigcap_{\phi(P')\subset C(\F)} C(\F)=
\bigcap_{\F\subset \bigcap_{p\in P'} \F(p)} C(\F)
=C\left(\bigcap_{p\in P'} \F(p)\right).
\end{align*}
Hence, $\F\in \conv_{\mathcal{C}}(\phi(P'))$ if and only if $\{P : P'\subset P\} \subset \F$.
The intersection $\bigcap_{P\in \F'} \conv_{\mathcal{C}}(\phi(P))$
is non-empty if and only if there is an $\F\in \Nv$ so that $\hat{\F}'\subset \F$.
Thus by the properties \ref{Nv_downset} and \ref{Nv_complete} 
\begin{equation*}
\bigcap_{P\in \F'} \conv_{\mathcal{C}}(\phi(P))\neq \emptyset\iff F'\in \Nv.
\end{equation*}
Therefore $\Nv_{\mathcal{C}}(\phi(P))=\phi(\Nv)$ as claimed.
\end{proof}

\section{Counterexample to Eckhoff's conjecture}\label{sec_counterexample}
\begin{proof}[Proof of Theorem~\ref{thmcounterexample}]
We shall use the Lemma~\ref{nervetospace} to construct the requisite convexity space.
Let $P=[3(k-1)+1]$. Consider the three kinds of families:
\begin{align*}
A[x]&=\bigl\{ \{x\} \bigr\}\cup \binom{P}{4},\\
B[xy:zw]&=\bigl\{ \{x,y\}, \{z,w\} \bigr\}\cup\bigl\{S\!\in\! \binom{P}{3} : \{x,y,z,w\}\cap S\neq \emptyset \bigr\} \cup \binom{P}{4},\text{ distinct }x,y,z,w\\
C[xy]&=\bigl\{ \{x,y\} \bigr\}\cup \binom{P}{3},\qquad x,y\text{ are distinct}.
\end{align*}
Here $x,y,z,w$ are elements of $P=[3(k-1)+1]$.
Let $\hat{A[x]},\hat{B}[xy:zw]$ and $\hat{C}[xy]$ be as in Proposition~\ref{nervechar}
property \ref{Nv_complete}. 
Let $\Nv$ consist of all the families, $\hat{A[x]},\hat{B}[xy:zw]$ and $\hat{C}[xy]$
and all their subfamilies. Let $\Nv_0$
consist only of families $\hat{A}[x]$, $\hat{B}[xy:zw]$ and $\hat{C}[xy]$.
 As $\Nv$ automatically satisfies properties \ref{Nv_downset} and \ref{Nv_complete} 
in Proposition~\ref{nervechar}
and $\F(p)\subset \hat{A}[p]$, by Lemma~\ref{nervetospace} it is a nerve of some convexity space. 
As $k\geq 3$, no family of the form $A[x],B[xy:zw]$ or $C[xy]$ contains $t$ disjoint sets.
From that it follows that none of $\hat{A}[x],\hat{B}[xy:zw]$ or $\hat{C}[xy]$
contain $k$ disjoint sets either, and same holds for every family in $\Nv$.
Therefore, to establish the theorem it remains to verify the property \ref{Nv_rt} with 
$r_2=4$.

As $\hat{A}$-, $\hat{B}$- and $\hat{C}$-families are the maximal families in $\Nv$, 
it suffices to show that whenever $\bF=\{\F_1,\dotsc,\F_4\}$ is a collections
of four families in $\Nv_0$, then there is a partition $\bF=\bF_1\cup\bF_2$ so that
$(\bigcap \bF_1)\cup(\bigcap \bF_2)$ is contained in some $\F\in \Nv_0$. 

To every family $\F$ we associate a subset 
\begin{equation*}
e(\F)=\F\cap \binom{P}{2}.
\end{equation*}
That is 
\begin{align*}
e(\hat{A}[x])&=\bigl\{ \{x,y\} : y\in P\setminus\{x\}\bigr\}\\
e(\hat{B}[xy:zw])&=\bigl\{\{x,y\},\{z,w\}\bigr\},\\
e(\hat{C}[xy])&=\bigl\{ \{x,y\}\bigr\}.
\end{align*}
Note that $e(\F_1\cap \F_2)=e(\F_1)\cap e(\F_2)$.
It is convenient think of $e(\F)$ as an edge of a hypergraph 
on the ground set $\binom{P}{2}$.

Note that if $\F_1,\F_2\in\Nv_0$
are two distinct families, then $\F_1\cap \F_2$ is contained in a $\hat{C}$-set. 
Moreover, if $e(\F_1)\cap e(\F_2)=\emptyset$, then $\F_1\cap\F_2$ in contained
in $\binom{P}{3}$.

Suppose $\F_1,\dotsc,\F_4$ are four families in $\Nv_0$. If $e(\F_1)\cap e(\F_2)=\emptyset$,
then $\F_1\cap \F_2\subset \binom{P}{3}$ and $\F_3\cap \F_4\subset \hat{C}[xy]$ for some $x,y$. Hence 
$(\F_1\cap \F_2)\cup(\F_3\cap \F_4)\subset \binom{P}{3}\cup \hat{C}[xy]=\hat{C}[xy]$. We may thus assume
that $e(F_1)\cap e(F_2)$ is non-empty, and similarly for other pairs of sets $\F_1,\dotsc,\F_4$.

There are five cases according to the number of $\hat{A}$-families among the four
families $\F_1,\dotsc\F_4$.

\textbf{There no $\hat{A}$-families:} Since every two families meet, and $e(\F_1),\dotsc,e(\F_4)$
contain $1$ or $2$ vertices each, it follows that $e(\F_1),\dotsc,e(\F_4)$ must have a common vertex,
say $\{1,2\}\in \binom{P}{2}$. Then $(\F_1\cap \F_2)\cup(\F_3\cap \F_4)\subset \hat{C}[12]$.

\textbf{There is a single $\hat{A}$-family $\F_1$:} As $e(\F_2)$, $e(\F_3)$ and $e(\F_4)$ 
pairwise meet, they either have a vertex in common, or $\F_2,\F_3,\F_4$ are $\hat{B}$-families,
and $e(\F_2),e(\F_3),e(\F_4)$ form a triangle. However, they cannot form the triangle
because $e(\F_1)$ would not meet each of $e(\F_2)$, $e(\F_3)$ and $e(\F_4)$. Thus,
$e(\F_1)\cap\dotsb\cap e(\F_4)$ is non-empty, and equals to say $\{1,2\}\in\binom{P}{2}$. Then
$(\F_1\cap \F_2)\cup(\F_3\cap \F_4)\subset \hat{C}[12]$.

\textbf{There are two $\hat{A}$-families $\F_1$ and $\F_2$:} 
The intersection $e(\F_3)\cap e(\F_4)$ 
contains just one element, say $\{x,y\}$. If $\F_1$ and $\F_2$ are 
just $\hat{A}[x]$ and $\hat{A}[y]$, then 
$(\F_1\cap \F_2)\cup (\F_3\cap \F_4)\subset \hat{C}[xy]$. 
If $\F_1=\hat{A}[z]$ and $z\not\in\{x,y\}$,
then it necessarily follows that $e(\F_3)=\bigl\{\{x,y\},\{z,w_3\}\bigr\}$ 
and $e(\F_4)=\bigl\{\{x,y\},\{z,w_4\}\bigr\}$ for some $w_3$ and $w_4$.
Thus $\F_2$ is either $\hat{A}[x]$ or $\hat{A}[y]$.
In either case $(\F_1\cap F_3)\cup(\F_2\cap \F_4)\subset \hat{B}[xy:zw_3]$.

\textbf{There are three $\hat{A}$-families $\F_1$, $\F_2$ and $\F_3$:} 
As $e(\F_4)$ has to meet all of $e(\F_1)$, $e(F_2)$, $e(F_3)$, it must
be that $\F_4$ is a $\hat{B}$-family, implying that $(\F_1\cap \F_2\cap\F_3)\cup \F_4=\F_4$.

\textbf{All four families are $\hat{A}$-families:} Say,
$\F_1=\hat{A}[x]$, $\F_2=\hat{A}[y]$, $\F_3=\hat{A}[z]$ and $\F_4=\hat{A}[w]$.
In that case $(\F_1\cap \F_2)\cup(F_3\cap \F_4)\subset \hat{B}[xy:zw]$.
\end{proof}

\section{Upper bound on Radon numbers}\label{sec_klogk}
The main ingredient in the proof of theorem~\ref{thmklogk} 
is a version of Kruskal--Katona theorem from \cite{bukh_kk}.
A \emph{$d$-dimensional $r$-uniform 
family} is a $d$-tuple of $r$-element sets. In other words, if we 
denote by $\binom{X}{r}$ the family of all $r$-element subsets of $X$, 
then $d$-dimensional $r$-uniform family is a subset of $\binom{X}{r}^d$.
A \emph{shadow} of such a family $\F\subset \binom{X}{r}^d$ is defined to be
\begin{equation*}
\partial \F \eqdef \bigl\{ (S_1\setminus\{x_i\},\dotsc,S_d\setminus\{x_d\} ) :  
   (S_1,\dotsc,S_d) \in \F, \text{and }x_i\in S_i\text{ for }i=1,\dotsc,d\bigr\}.
\end{equation*}
\begin{lemma}[Theorem~1 of \cite{bukh_kk}]\label{kk_lemma}
Suppose $\F\subset \binom{X}{r}^d$ is a $d$-dimensional $r$-uniform family
of size
\begin{equation*}
\abs{\F}=\binom{x}{r}^d,
\end{equation*}
where $x\geq r$ is a real number. Then
\begin{equation*}
\abs{\partial \F}\geq \binom{x}{r-1}^d.
\end{equation*}
\end{lemma}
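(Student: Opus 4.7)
The plan is to proceed by induction on the dimension $d$. The base case $d=1$ is precisely the classical Kruskal--Katona theorem in its real-parameter (Lov\'asz) form, of which Lemma~\ref{kk_lemma} is the $d$-fold product version.

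For the inductive step, I would slice $\F$ by its last coordinate: for $T\in\binom{X}{r}$ set $\F_T\eqdef\{(S_1,\dotsc,S_{d-1}):(S_1,\dotsc,S_{d-1},T)\in\F\}$. The key observation is that the shadow $\partial\F$ slices cleanly along \emph{its} last coordinate as well: an element $(S_1',\dotsc,S_{d-1}',T')\in\binom{X}{r-1}^d$ lies in $\partial\F$ iff $(S_1',\dotsc,S_{d-1}')$ lies in the $(d-1)$-dimensional shadow of $\mathcal{G}_{T'}\eqdef\bigcup_{T\supset T'}\F_T$. Hence $\abs{\partial\F}=\sum_{T'\in\binom{X}{r-1}}\abs{\partial_{d-1}\mathcal{G}_{T'}}$. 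Writing $\abs{\mathcal{G}_{T'}}=\binom{z_{T'}}{r}^{d-1}$, the inductive hypothesis gives $\abs{\partial_{d-1}\mathcal{G}_{T'}}\geq\binom{z_{T'}}{r-1}^{d-1}$, and the lemma reduces to the scalar inequality
\begin{equation*}
\sum_{T'\in\binom{X}{r-1}}\binom{z_{T'}}{r-1}^{d-1}\geq\binom{x}{r-1}^{d}
\end{equation*}
subject to the constraint $\abs{\F}=\binom{x}{r}^d$.

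The main obstacle is this final scalar inequality, because the $z_{T'}$ depend on $\F$ only through the set-theoretic unions $\bigcup_{T\supset T'}\F_T$, not through a simple sum over $T\supset T'$. I would resolve it by a compression argument carried out on $\F$ \emph{before} slicing: the standard shift-based exchange that proves the one-dimensional Kruskal--Katona theorem extends coordinate-wise to the $d$-dimensional setting, preserving $\abs{\F}$ while weakly decreasing $\abs{\partial\F}$. Iterating the compression over all $d$ coordinates reduces the problem to an $\F$ whose slice in every coordinate is colex-initial; in that extremal configuration the $\mathcal{G}_{T'}$ inherit a nested structure, and the desired scalar inequality reduces to a direct computation amounting to one final invocation of the classical Kruskal--Katona theorem applied to the $1$-dimensional family of values of the last coordinate.
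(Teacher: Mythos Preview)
The paper does not prove Lemma~\ref{kk_lemma}: it is quoted as Theorem~1 of \cite{bukh_kk} and used as a black box in the proof of Theorem~\ref{thmklogk}. There is therefore no proof in the present paper against which to compare your proposal.

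As for the proposal itself, the outline is the natural one---induction on $d$ with the Lov\'asz form of Kruskal--Katona as the base case, combined with coordinate-wise compression---and the slicing identity $\abs{\partial\F}=\sum_{T'}\abs{\partial_{d-1}\mathcal{G}_{T'}}$ is correct. However, essentially all of the content of the lemma is concentrated in the step you describe as ``a direct computation amounting to one final invocation of the classical Kruskal--Katona theorem''. Even after full compression, the quantities $\abs{\mathcal{G}_{T'}}=\bigl|\bigcup_{T\supset T'}\F_T\bigr|$ are not controlled by $\abs{\F}=\sum_T\abs{\F_T}$ in any straightforward way, because a union is not a sum; the nested structure of compressed slices helps, but turning it into the scalar inequality you wrote down still requires a genuine argument rather than a bare citation of the $d=1$ case. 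As written, your proposal identifies the right framework but leaves the actual crux unproved.
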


In addition to multidimensional Kruskal--Katona theorem, we shall need four lemmas. 
The first two lemmas are a bound on Tur\'an numbers of hypergraphs
and a bound on the independence numbers of graphs in which every subgraph
have a large independence number.
\begin{lemma}[\cite{decaen_turan}]\label{turan_bound}
If $H$ is an $s$-uniform hypergraph on $n$ vertices with fewer than
$\binom{l-1}{s-1}^{-1}\frac{n-l+1}{n-s+1}\binom{\abs{H}}{s}$ edges, then
$H$ contains an independent set on $l$ vertices.
\end{lemma}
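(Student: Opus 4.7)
I would prove the contrapositive: if $H$ is an $s$-uniform hypergraph on $n$ vertices in which every $l$-subset contains at least one edge, then $|H|\geq \binom{l-1}{s-1}^{-1}\frac{n-l+1}{n-s+1}\binom{n}{s}$. This is the classical Tur\'an-type lower bound for uniform hypergraphs due to de Caen.

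The natural first step is a double count over $(l-1)$-subsets. For each $(l-1)$-subset $L'\subset V(H)$, set $a(L')=\#\{e\in H:e\subset L'\}$ and $b(L')=\#\{e\in H:|e\cap L'|=s-1\}$. For every $v\notin L'$, the $l$-set $L'\cup\{v\}$ contains an edge of $H$; such an edge either lies in $L'$ (contributing to $a(L')$) or has its unique vertex outside $L'$ equal to $v$ (contributing to $b(L')$). When $a(L')=0$, distinct choices of $v$ force distinct boundary edges, giving $b(L')\geq n-l+1$. Hence $(n-l+1)a(L')+b(L')\geq n-l+1$ for every $L'$. Summing over $L'$ and using the identities $\sum_{L'}a(L')=|H|\binom{n-s}{l-s-1}$, $\sum_{L'}b(L')=s|H|\binom{n-s}{l-s}$, together with $(n-l+1)\binom{n-s}{l-s-1}=(l-s)\binom{n-s}{l-s}$, produces $l|H|\binom{n-s}{l-s}\geq l\binom{n}{l}$, i.e.\ $|H|\geq \binom{n}{s}/\binom{l}{s}$.

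This is weaker than the de Caen bound by a factor of $l(n-l+1)/(s(n-s+1))$, and recovering that factor is the main obstacle. The refinement in \cite{decaen_turan} sharpens the dichotomy above: the case $a(L')\geq 1$ is wasteful, since a single edge $e\subset L'$ participates in the $b$-count of many $(l-1)$-subsets, and a weighting of edges balanced across their containing $(l-1)$-sets tightens the inequality. Equivalently, because $H-v$ still has no independent $l$-set for every $v\in V(H)$ (any such set in $H-v$ would also be independent in $H$), one obtains a recursion $c(n,l,s)\geq\tfrac{n}{n-s}c(n-1,l,s)$; iterated from a base case that encodes the sharper estimate on small configurations, this extracts the missing $\tfrac{n-l+1}{n-s+1}$ factor. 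I would follow de Caen's original accounting to pin down the constant.
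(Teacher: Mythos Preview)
The paper does not prove this lemma; it is quoted from \cite{decaen_turan} and used as a black box. So there is no proof in the paper to compare against, and your closing sentence---deferring to de Caen's original accounting---is in effect exactly what the paper itself does.

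Your attempt to sketch an independent proof has a genuine gap, however. The double count over all $(l-1)$-subsets is correct and cleanly yields $\abs{H}\geq\binom{n}{s}/\binom{l}{s}$. But neither of your two proposed routes to the missing factor $l(n-l+1)/\bigl(s(n-s+1)\bigr)$ actually works. The vertex-deletion recursion
\[
c(n,l,s)\;\geq\;\tfrac{n}{n-s}\,c(n-1,l,s)
\]
is valid (average the inequality $\abs{E(H-v)}\geq c(n-1,l,s)$ over $v\in V(H)$), but iterating it from the only honest base case $c(l,l,s)=1$ returns precisely $\binom{n}{s}/\binom{l}{s}$ again---no improvement. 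Worse, the recursion is strictly too weak to propagate de Caen's bound inductively: assuming $c(n-1,l,s)\geq\tfrac{n-l}{n-s}\binom{n-1}{s}/\binom{l-1}{s-1}$ and multiplying by $n/(n-s)$ falls short of $\tfrac{n-l+1}{n-s+1}\binom{n}{s}/\binom{l-1}{s-1}$, since $\tfrac{n-l}{n-s}<\tfrac{n-l+1}{n-s+1}$ whenever $s<l$. So no ``base case that encodes the sharper estimate'' can rescue this line. Your other suggestion, a weighted refinement of the $(l-1)$-subset count, is not developed far enough to assess.

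De Caen's actual argument is different in kind: it restricts the double count to \emph{independent} $(l-1)$-sets $L'$ (for which $a(L')=0$ automatically, so $b(L')\geq n-l+1$ with no slack) and then controls how often an edge can contribute to such $L'$. It is this restriction to independent sets---not an averaging recursion---that produces the extra factor.
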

\begin{lemma}[Special case of Theorem 2.1 from \cite{alon_sudakov}]\label{lemma_alon_sudakov}
Let $t<s\leq 2s-3$, and let $G$ be a graph on $n$ vertices. Suppose that every set of 
size $s$ contains an independent set of size $t$. Then $G$ contains
an independent set of size $n-s+1$.
\end{lemma}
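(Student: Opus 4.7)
The conclusion $\alpha(G)\ge n-s+1$ is equivalent, via Gallai's identity $\alpha(G)+\tau(G)=n$, to the statement $\tau(G)\le s-1$, where $\tau(G)$ is the vertex cover number. I read the hypothesis on $t,s$ as $t<s$ and $s\le 2t-3$ (so that $t\ge(s+3)/2$); the displayed ``$s\le 2s-3$'' is vacuous in $t$ and appears to be a typographical slip.

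I would argue by contradiction: assume $\tau(G)\ge s$. The elementary inequality $\tau(G)\le 2\nu(G)$---take both endpoints of every edge of a maximum matching, noting that any uncovered edge would extend the matching---yields $\nu(G)\ge s/2$. Choose a matching $M$ with $\lceil s/2\rceil$ disjoint edges; its vertex set $V(M)$ has $2\lceil s/2\rceil\ge s$ elements. Pick any $s$-subset $S\subseteq V(M)$.

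Removing at most one vertex from $V(M)$ breaks at most one edge of $M$, so $G[S]$ still contains a matching of size $\lfloor s/2\rfloor$. Hence $\tau(G[S])\ge\lfloor s/2\rfloor$, and Gallai applied to $G[S]$ gives $\alpha(G[S])\le s-\lfloor s/2\rfloor=\lceil s/2\rceil$. However, the hypothesis forces $\alpha(G[S])\ge t$, and $s\le 2t-3$ yields $t\ge(s+3)/2>\lceil s/2\rceil$, a contradiction. This proves $\tau(G)\le s-1$ and therefore $\alpha(G)\ge n-s+1$.

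There is essentially no hard step; the whole argument is a short calculation combining $\tau\le 2\nu$ with Gallai's identity. The numerical threshold $s\le 2t-3$ is exactly what is needed: a perfect matching on $s$ vertices already achieves $\alpha=\lceil s/2\rceil$, so the bound on $\alpha(G[S])$ is tight and matches the threshold $t=(s+3)/2$ precisely. The full strength of Theorem~2.1 of Alon--Sudakov presumably goes beyond this regime, but the above suffices for the special case stated.
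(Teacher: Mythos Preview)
Your proof is correct, and your reading of the hypothesis is the right one: the printed ``$s\le 2s-3$'' is a slip for ``$s\le 2t-3$''. In the paper's only application of this lemma one takes $t=r$ and $s=2r-3$, so the inequality holds with equality, confirming your interpretation.

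The paper itself does not prove this lemma; it simply cites it as a special case of Theorem~2.1 of Alon--Sudakov. Your argument therefore gives strictly more than the paper offers: a short self-contained proof via Gallai's identity together with the elementary bound $\tau\le 2\nu$. The matching construction also explains why $s\le 2t-3$ is the natural threshold --- a large matching inside an $s$-set forces $\alpha\le\lceil s/2\rceil$, and $t\ge (s+3)/2>\lceil s/2\rceil$ is precisely what excludes this. For the paper's purposes your proof would remove the dependence on the external reference at essentially no cost, since the application only needs the odd case $s=2r-3$ where your bound is tight.
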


Our third lemma is purely computational.
We say that a tuple $(S_1,\dotsc,S_d)\in\binom{P}{a}^d$ is \emph{$r$-good}
if there are $r$ pairwise disjoint sets $S_{i_1},\dotsc,S_{i_r}$ among $S$'s.
\begin{lemma}\label{lemma_rgood}
Let $P$ be a finite set. There are at most 
\begin{equation*}
C(d)(a^2/\abs{P})^{d-r+1} \binom{\abs{P}}{a}^d
\end{equation*}
$r$-bad tuples in $\binom{P}{a}^d$, where $C(d)$ is a constant
that depends only on $d$.
\end{lemma}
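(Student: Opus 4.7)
The plan is to reformulate $r$-badness in graph-theoretic terms and then count $r$-bad tuples via a choice of a small dominating independent set. For each tuple $(S_1,\dotsc,S_d)\in\binom{P}{a}^d$, form the intersection graph $G$ on vertex set $[d]$ with $i\sim j$ whenever $S_i\cap S_j\neq\emptyset$. The tuple is $r$-bad exactly when the independence number $\alpha(G)$ is at most $r-1$. Pick a maximum independent set $I\subseteq[d]$; then $|I|\leq r-1$, and by maximality every $j\in[d]\setminus I$ has a neighbour in $I$, so $S_j$ meets $S_i$ for some $i\in I$.

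Next I count $r$-bad tuples by first choosing $I\subseteq[d]$ with $|I|\leq r-1$ (at most $2^d$ choices) and then bounding the number of tuples for which every $j\notin I$ has $S_j$ meeting some $S_i$ with $i\in I$. The family $(S_i)_{i\in I}$ is chosen in at most $\binom{|P|}{a}^{|I|}$ ways. For each $j\notin I$ I use the key estimate that the number of $a$-subsets of $P$ meeting a given $a$-set $T$ is at most $a\binom{|P|-1}{a-1}=\frac{a^2}{|P|}\binom{|P|}{a}$, by a union bound over the $a$ elements of $T$; summing over $i\in I$ gives at most $(r-1)\frac{a^2}{|P|}\binom{|P|}{a}$ admissible values of $S_j$. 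Multiplying these bounds, the number of tuples compatible with a given $I$ is at most
\begin{equation*}
\binom{|P|}{a}^d\left((r-1)\frac{a^2}{|P|}\right)^{d-|I|}.
\end{equation*}

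Since $|I|\leq r-1$ forces $d-|I|\geq d-r+1$, the exponent of $a^2/|P|$ is at least $d-r+1$. In the regime $a^2/|P|\leq 1$ this yields $(a^2/|P|)^{d-|I|}\leq(a^2/|P|)^{d-r+1}$; summing over the $\leq 2^d$ choices of $I$ and absorbing the factor $(r-1)^{d-r+1}\leq d^d$ into $C(d)$ gives the desired bound. In the complementary regime $a^2/|P|>1$ the estimate is trivial, since the total number of tuples $\binom{|P|}{a}^d$ is already at most $(a^2/|P|)^{d-r+1}\binom{|P|}{a}^d$. I do not expect a substantive obstacle: the one insight is the domination property of a maximum independent set, and the rest is an elementary union-bound calculation whose only delicate aspect is tracking the dependence of $C(d)$ on $d$.
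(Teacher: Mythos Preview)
Your argument is correct and follows the same overall strategy as the paper: pass to the intersection graph $G$ on $[d]$, observe that $r$-badness means $\alpha(G)\leq r-1$, fix a combinatorial witness to this fact, and union-bound over witnesses using that $\Pr[S_i\cap S_j\neq\emptyset]\leq a^2/|P|$ for independent uniform $a$-sets. The only genuine difference is the choice of witness. The paper notes that $\alpha(G)\leq r-1$ forces $G$ to have at most $r-1$ connected components, hence to contain a forest $F$ with $d-r+1$ edges; it then union-bounds over all such forests, ordering the vertices so that each edge of $F$ contributes an independent factor of $a^2/|P|$. You instead use that a maximum independent set $I$ of size $\leq r-1$ is dominating, and bound the number of admissible $S_j$ for each $j\notin I$ by $(r-1)\,\dfrac{a^2}{|P|}\binom{|P|}{a}$. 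This is effectively the paper's forest argument restricted to star forests centred on $I$, at the price of a harmless extra factor of $(r-1)$ per coordinate outside $I$. Both routes give the same bound up to the value of $C(d)$; yours avoids the spanning-forest/contraction observation, while the paper's avoids the extra $(r-1)$ factors.

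One small slip in your write-up: the residual factor you need to absorb into $C(d)$ is $(r-1)^{d-|I|}$, not $(r-1)^{d-r+1}$, and since $|I|$ can be as small as $1$ this may be as large as $(r-1)^{d-1}$. This does not affect correctness, since the lemma is only invoked (and only nontrivial) with $r\leq d$, so $(r-1)^{d-1}\leq d^d$ still depends only on $d$.
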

\begin{proof}
For $S=(S_1,\dots,S_d)\in\binom{P}{a}^d$ let $G[S]$ be a graph on $\{1,\dotsc,d\}$ with
$ij$ forming an edge if $S_i\cap S_j\neq\emptyset$.
A tuple $S$ is $r$-bad if and only if the independence number
of $G[S]$ is less than $r$. Suppose that the largest forest in 
$G[S]$ has $m$ edges, then by contracting these edges we obtain
an independent set of size $d-m$. Thus if a tuple 
$S$ is $r$-bad, then $G[S]$ contains a forest $F$ with $d-r+1$ edges. 
We say that the forest $F$ \emph{witnesses} that $S$ is $r$-bad.

Fix a forest $F$. We shall bound the number of $r$-bad tuples $S$ for 
which $F$ is a witness that $S$ is $r$-bad. 
Let $v_1,\dotsc,v_d$ be a relabelling of $\{1,\dotsc,d\}$ so that
in $F$ the vertex $v_i$ is adjacent to at most one vertex $v_j$ with $j<i$.
Pick $S_1,\dotsc,S_d$ uniformly at random from $\binom{P}{a}$.
If $v_i$ is adjacent to some $v_j$ with $j<i$ let $E_i$ be the 
event that $S_i\cap S_j\neq \emptyset$. If $v_j$ is adjacent to
none $v_j$ with $j<i$ let $E_i$ be the event that holds with probability
$1$. Then
\begin{align*}
\Pr[F\text{ is a witness that }S\text{ is $r$-bad}]=
\prod_{i=1}^d \Pr[E_i|E_1,\dotsc,E_{i-1}]=
\prod_{i=1}^d \Pr[E_i]\leq (a^2/\abs{P})^{d-r+1}.
\end{align*}
As the number of forests on $d$ vertices depends only on $d$, the lemma follows
by the union bound.
\end{proof}

Finally, the third lemma that we need is a restatement of Jamison's upper 
bound $r_{2^t}\leq r_{2}^t$ in terms of nerves. We include the 
proof for completeness.
\begin{lemma}
Suppose $P$ a set in a convexity space, and $\Nv=\Nv(P)$ is its nerve. 
Then for every set $P'\subset P$ of size
$\abs{P'}=r_2^t$ there is a family $\F\in \Nv$ containing $2^t$ disjoint
subsets of $P'$.
\end{lemma}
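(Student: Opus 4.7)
The plan is to induct on $t$, paralleling Jamison's iterated-Radon argument at the level of nerves. For the base case $t=1$, the hypothesis $\abs{P'}=r_2$ and the definition of the Radon number yield a partition of $P'$ into two non-empty parts whose convex hulls share a point, and property~\ref{Nv_partition} of Proposition~\ref{nervechar} then provides a family in $\Nv$ consisting of these two disjoint subsets of $P'$.

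For the inductive step, assume the statement for $t-1$ and suppose $\abs{P'}=r_2^t$. Partition $P'=G_1\cup\dotsb\cup G_{r_2}$ into $r_2$ groups of size $r_2^{t-1}$ each; applying the inductive hypothesis inside each $G_i$ produces a family $\F_i\in\Nv$ containing $2^{t-1}$ pairwise disjoint subsets $D_{i,1},\dotsc,D_{i,2^{t-1}}\subset G_i$. Replace each $\F_i$ by its upward closure $\hat{\F}_i\in\Nv$ via property~\ref{Nv_complete}, and invoke property~\ref{Nv_rt} with $t=2$ on the $r_2$ families $\hat{\F}_1,\dotsc,\hat{\F}_{r_2}$: the result is a partition $\{1,\dotsc,r_2\}=I_1\cup I_2$ together with a family
\[
\F^*\eqdef\Bigl(\bigcap_{i\in I_1}\hat{\F}_i\Bigr)\cup\Bigl(\bigcap_{i\in I_2}\hat{\F}_i\Bigr)\in\Nv.
\]
For $j=1,\dotsc,2^{t-1}$ define $U_j=\bigcup_{i\in I_1}D_{i,j}$ and $V_j=\bigcup_{i\in I_2}D_{i,j}$. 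Since $U_j\supset D_{i,j}\in\hat{\F}_i$ for every $i\in I_1$, upward closure forces $U_j\in\bigcap_{i\in I_1}\hat{\F}_i\subset\F^*$, and analogously $V_j\in\F^*$. The $2^t$ sets $U_1,\dotsc,U_{2^{t-1}},V_1,\dotsc,V_{2^{t-1}}$ are pairwise disjoint: the $D_{i,j}$ with fixed $i$ are disjoint by induction, distinct values of $j$ give disjoint blocks inside each $G_i$, and the groups $G_i$ themselves are disjoint.

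The one delicate point to address is the non-emptiness of both $I_1$ and $I_2$, since an empty part would leave only $2^{t-1}$ disjoint sets rather than $2^t$. This should follow from the proof of property~\ref{Nv_rt} in Proposition~\ref{nervechar}: the partition of $\hat{\F}_1,\dotsc,\hat{\F}_{r_2}$ is induced from a $2$-Radon partition of $r_2$ auxiliary points, and both blocks of such a partition are non-empty so that the convex-hull intersection is meaningful. With this verified, the argument becomes a straightforward nerve-level translation of the classical product bound $r_{2^t}\le r_2^t$, and I do not anticipate further obstacles.
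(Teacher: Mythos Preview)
Your argument is correct and is essentially the paper's own proof: the same induction on $t$, the same partition of $P'$ into $r_2$ blocks of size $r_2^{t-1}$, the same appeal to property~\ref{Nv_rt} to merge the resulting families, and the same union construction $U_j=\bigcup_{i\in I_1}D_{i,j}$ to produce the $2^t$ disjoint sets (the paper writes $\bigcap$ where it plainly intends $\bigcup$). The only cosmetic differences are that the paper starts the induction at $t=0$ rather than $t=1$ and does not pause over the non-emptiness of $I_1$ and $I_2$; your remark on that point is a welcome clarification.
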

\begin{proof}
The proof is by induction on $t$. The base case $t=0$ is trivial. Suppose $t\geq 1$.
Let $P'=P_1'\cup\dotsb\cup P_{r_2}'$ be a partition of $P'$ into sets of size
$r_2^{t-1}$. By the induction hypothesis, there are families 
$\F_1,\dotsc,\F_{r_2}$ such that each $\F_i$ contains $2^{t-1}$ disjoint subsets of 
$P_i'$. Let these subsets be $R_{i,1},\dotsc,R_{i,2^{t-1}}$. 
By property \ref{Nv_rt} of the proposition~\ref{nervechar}, there is a 
a set $I\subset [r_k]$ so that
\begin{equation*}
\bigcap_{i\in I} \F_i \cup \bigcap_{i\not\in I} \F_i\in \Nv.
\end{equation*}
By property~\ref{Nv_complete} the intersection 
$R_j=\bigcap_{i\in I} R_{i,j}$ is in $\bigcap_{i\in I} \hat{\F}_i$
for each $j=1,\dotsc,2^{t-1}$. The sets $R_j$ are $2^{t-1}$ disjoint subsets
of $\bigcup_{i\in I} P'_i$. Similarly one obtains $2^{t-1}$ disjoint
subsets of $\bigcup_{i\not\in I} P'_i$, for the total of $2^t$ disjoint
subsets of $P'$.
\end{proof}
\begin{proof}[Proof of theorem~\ref{thmklogk}]
It suffices to show that for every nerve $\Nv$ on $\abs{P}=k^2 \log^2 k$ points
there are $k$ disjoint sets $S_1,\dotsc,S_k\subset P$ and a family $\F$ 
that contains all of these sets.

For brevity we shall write $r=r_2$ and $t=1+\lceil \log_2 r\rceil$. 
Define a $(2r-3)$-dimensional family $\T\subset \binom{P}{r^t}^{2r-3}$ as follows: 
A tuple $(S_1,\dotsc,S_{2r-3})\in  \binom{P}{r^t}^{2r-3}$ is in $\T$ if there is a 
family $\F\in \Nv$
such that $\{S_1,\dotsc,S_{2r-3}\}\subset\F$.
Let $P_0\subset P$ be any $(2r-3)r^t$-element
subset of $P$. Let $P'\subset P_0$ be an arbitrary $r^t$-element
subset of $P_0$. By the preceding lemma there is a family $\F$
that contains $2^t$ disjoint subsets of $P'$. Since $2r-3\leq 2^t$, 
by property \ref{Nv_complete} it follows that $\F$ contains $2r-3$ 
disjoint subsets of size $r^t$ each that 
partition $P_0$. In other words, $P_0$ gives rise to at least
one tuple in $\T$. Since $P_0$ is an arbitrary $(2r-3)r^t$-element
subset of $P$, we conclude that 
\begin{equation*}
\abs{\T}\geq \binom{\abs{P}}{(2r-3)r^t}\geq c_1(r)\abs{P}^{(2r-3)r^t} \geq 
\binom{\abs{P}}{r^t}^{2r-3}-\binom{(1-c_2(r)) \abs{P}}{r^t}^{2r-3}
\end{equation*}
for some positive constants $c_1(r),c_2(r)$ that depend only on $r$.

Let $m=\lceil \log k/c_2(r)\rceil$. Define a $(2r-3)$-dimensional family 
$\T'\subset \binom{P}{mr^t}^{2r-3}$ in the same way as $\T$ was
defined: namely, $S\in \T'$ if there is an $\F\in \Nv$ such 
that $S\subset \F$. Note that the property \ref{Nv_complete} implies that
if $S\in \binom{P}{mr^t}^{2r-3}$ is not in $\T'$, then neither is any
family obtained from $S$ by removing some elements from each set in $S$.
Lemma~\ref{kk_lemma} applied to the complement of $\T'$ yields
\begin{equation*}
\abs{\T'}\geq \binom{\abs{P}}{mr^{t+1}}^{2r-3}-\binom{(1-c_2(r)) \abs{P}}{mr^t}^{2r-3}.
\end{equation*}

Let $H\subset \binom{\binom{P}{mr^t}}{2r-3}$ be a $(2r-3)$-uniform hypergraph
on $\binom{P}{mr^t}$ with edges
\begin{equation*}
\{S_1,\dotsc,S_{2r-3}\}\in H \iff (S_1,\dotsc,S_{2r-3})\in \T'\text{ and }(S_1,\dotsc,S_{2r-3})\text{ is $r$-good}.
\end{equation*}
By Lemma~\ref{lemma_rgood}, it follows that
\begin{align*}
\abs{H}&\geq \frac{1}{(2r-3)!}\left(\abs{\T'}-c_3(r)(m^2r^{2t}/\abs{P})^{r-2} \binom{\abs{P}}{mr^t}^{2r-3}\right)\\
&\geq \binom{\binom{\abs{P}}{mr^t}}{2r-3}\left(1-(1-c_2(r))^{(2r-3)mr^t}-c_4(r)(m^2/\abs{P})^{r-2}\right)
\end{align*}
Since $m>\log k/c_2(r)$, and $\abs{P}\geq (9c_4(r))^{1/(r-2)}m^2k^2$ it follows that the density of $H$ is
\begin{equation*}
\abs{H}/\binom{\binom{\abs{P}}{mr^t}}{2r-3}\geq 1-k^{(2r-3)r^t}-(3k)^{-(2r-4)} \geq 1-(2k)^{-(2r-4)}
\end{equation*}
for $k$ large enough.

By Lemma~\ref{turan_bound} the hypergraph $H$ contains a clique on $2k$ vertices. Let 
$S_1,\dotsc,S_{2k}\in \binom{P}{mr^t}$ be the vertices of this clique.
Since edges of $H$ are $r$-good among every $2r-3$ of these $2k$ sets 
there are $r$ that are pairwise disjoint. Thus, by Lemma~\ref{lemma_alon_sudakov}
there are $k$ of them, say $S_1,\dotsc,S_k$, that are pairwise disjoint.

We claim that for every $I\subset [k]$ there is
a family $\F_I\in \Nv$ that contains
$S_i$ for every $i \in I$. The proof is by induction
on $\abs{I}$ starting with $\abs{I}=2r-3$.
If $\abs{I}=2r-3$, then the claim holds because
$\{S_i : i\in I\}$ is an edge in $H$.
Suppose $\abs{I}>2r-3$. Pick any $r$ distinct
$\abs{I}-1$-element subsets $I_1,\dotsc,I_r$ of $I$.
Then by by property \ref{Nv_rt} applied to families
$\F_{I_1},\dotsc,\F_{I_r}$ it follows that
there is a $J\subset [r]$ so that
$\F=(\bigcap_{j\in J} \F_{I_j})\cup(\bigcap_{j\not\in J} \F_{I_j})\in \Nv$.
Since the family $\F$ contain $\F_i$ for every $i\in I$, we may put $\F_I=\F$.

Finally, the family $\F_{[k]}$ contains $k$ disjoint sets $S_1,\dotsc,S_k$,
as required.
\end{proof}

\section{Convexity spaces with \texorpdfstring{$r_2=3$}{r2=3}}\label{sec_r23}
The space with $r_2=3$ are especially nice because of the following
lemma, which is implicit in \cite{jamison_r3}.

\begin{lemma}
Let $P$ be a set in a convexity space with $r_2=3$, 
and let $\Nv=\Nv(P)$ be its nerve. Then there is
a family $\F_p\in\Nv$
for each $p\in P$, and these families satisfy
\begin{enumerate}
\renewcommand{\theenumi}{(J\arabic{enumi})}
\renewcommand{\labelenumi}{\theenumi}
\item \label{r3_point} $\{p\}\in \F_p$.
\item \label{r3_excl} If $p,q,r$ are any three points of $P$, then
either $\{p,q\}\in \F_r$ or $\{p,r\}\in F_q$ or
$\{q,r\}\in \F_p$.
\item \label{r3_transf} If $\{q,r\}\in \F_p$ and $\{r,s\}\in \F_q$, then $\{r,s\}\in\F_p$.
\end{enumerate}
\end{lemma}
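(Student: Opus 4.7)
The plan is to use the most natural choice for each point: define
\[
\F_p \eqdef \{S \subseteq P : p \in \conv S\}
\]
for every $p \in P$. With this definition $p$ itself lies in $\bigcap_{S \in \F_p} \conv S$, so the membership $\F_p \in \Nv$ is immediate, and property \ref{r3_point} is the tautology $p \in \conv\{p\}$.

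For \ref{r3_excl} I would simply apply the hypothesis $r_2 = 3$ to the three-point set $\{p,q,r\}$. Any partition of a three-element set into two nonempty classes consists of a singleton together with a pair, and the non-emptiness of the intersection of the two resulting convex hulls says exactly that the singleton element lies in the convex hull of the pair. Hence Radon's property delivers one of $\{q,r\} \in \F_p$, $\{p,r\} \in \F_q$, or $\{p,q\} \in \F_r$, which is precisely \ref{r3_excl}.

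For \ref{r3_transf}, which in fact does not require $r_2 = 3$ at all, assume $p \in \conv\{q,r\}$ and $q \in \conv\{r,s\}$. Then $\{q,r\} \subseteq \conv\{r,s\}$ (the element $q$ by hypothesis, and $r$ because $r \in \{r,s\}$), and by the minimality in the definition of convex hull this forces $\conv\{q,r\} \subseteq \conv\{r,s\}$. Applying this inclusion to $p \in \conv\{q,r\}$ yields $p \in \conv\{r,s\}$, i.e.\ $\{r,s\} \in \F_p$. I anticipate no serious obstacle: the only real content of the lemma is the observation that the naive family $\{S : p \in \conv S\}$ is already a nerve member and simultaneously a witness for all three axioms; the use of $r_2 = 3$ is concentrated entirely in \ref{r3_excl}, while \ref{r3_point} and \ref{r3_transf} are formal consequences of the convex-hull operator.
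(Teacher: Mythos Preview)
Your arguments for \ref{r3_point} and \ref{r3_transf} are fine, but the proof of \ref{r3_excl} has a real gap. From the Radon partition $\{p\}\cup\{q,r\}$ you only get $\conv\{p\}\cap\conv\{q,r\}\neq\emptyset$; concluding $p\in\conv\{q,r\}$ requires $\conv\{p\}=\{p\}$, i.e.\ that singletons are convex. The paper's definition of a convexity space does not impose this $T_1$ condition, and without it your families can fail \ref{r3_excl}. For a concrete failure take $X=\{1,2,3,4,5\}$ with convex sets
\[
\emptyset,\ \{4\},\ \{5\},\ \{1,4\},\ \{2,4\},\ \{3,4\},\ \{1,2,4\},\ \{1,3,4\},\ \{2,3,4\},\ X.
\]
One checks that $r_2=3$ (the pair $\{1,5\}$ has no Radon partition, while every triple does), yet for $P=\{1,2,3\}$ none of $1\in\conv\{2,3\}$, $2\in\conv\{1,3\}$, $3\in\conv\{1,2\}$ holds: the Radon partition of $\{1,2,3\}$ is witnessed by the extraneous point~$4$, which lies in each $\conv\{i\}$.

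The paper sidesteps this by choosing each $\F_p$ to be a \emph{maximal} element of $\Nv$ containing $\{p\}$ and then invoking property~\ref{Nv_rt} (with $t=2$) for the triple $\F_p,\F_q,\F_r$: this forces, say, $\F_q\cap\F_r\subseteq\F_p$, and since maximal families are upward closed (by~\ref{Nv_complete}) one gets $\{q,r\}\in\F_q\cap\F_r\subseteq\F_p$. Your $\{S:p\in\conv S\}$ is maximal precisely when singletons are convex, in which case your argument and the paper's coincide; in general one must enlarge it to a maximal family.
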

\begin{proof}
Let $\F_p$ be a maximal family containing $\{p\}$. Then
the other conditions follow from the property \ref{Nv_rt} applied
to the triple of families $\F_p,\F_q,\F_r$.
\end{proof}
\begin{proof}[Proof of Proposition~\ref{prop_r23sel}]
Let $I=\{(p,q,r) : p\in \conv\{q,r\}\}$.
Since there are $\binom{n}{3}$ triples $\{p,q,r\}$,
each of which contributes at least at least one
element $I$, the proposition follows by the pigeonhole
principle.
\end{proof}
Since Jamison's proof of Eckhoff's conjecture is especially short
in the language of nerves, we include it:
\begin{theorem}
If $r_2=3$, then $r_k\leq 2(k-1)+1$.
\end{theorem}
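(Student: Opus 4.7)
The idea is to induct on $k$ while peeling off a ``diameter'' pair $\{a,b\}\subset P$ whose two endpoints contain every other point of $P$ in $\conv\{a,b\}$. The remaining $2k-3$ points then admit a $(k-1)$-partition by the inductive hypothesis, and the common point of that smaller partition automatically lies in $\conv\{a,b\}$, so $\{a,b\}$ can be added as a $k$th part.

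\textbf{Step 1 (covering-pair lemma).} I would first prove: for every finite $P$ with $\abs{P}\geq 2$ there exist distinct $a,b\in P$ with $\{a,b\}\in \F_p$ for every $p\in P\setminus\{a,b\}$. Proceed by induction on $\abs{P}$; the base $\abs{P}=2$ is vacuous. For the inductive step, pick any $p\in P$, use the hypothesis to obtain a covering pair $(a',b')$ for $P\setminus\{p\}$, and invoke \ref{r3_excl} on $\{a',b',p\}$. Either $p$ is between $a'$ and $b'$ (so $(a',b')$ still covers $P$), or $a'$ is between $b'$ and $p$ (the case with $b'$ is symmetric). In the latter case, for any $q\in P\setminus\{a',b',p\}$ we have $\{a',b'\}\in \F_q$ by induction and $\{b',p\}\in \F_{a'}$ by assumption, so \ref{r3_transf} applied with the variables $p,q,r,s$ of that property taken to be $q,a',b',p$ respectively gives $\{b',p\}\in \F_q$, confirming that $(b',p)$ covers $P$.

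\textbf{Step 2 (main induction).} I prove $r_k\leq 2k-1$ by induction on $k$, with base $k=2$ being the hypothesis $r_2=3$. For $k\geq 3$ and $\abs{P}\geq 2k-1$ (merging extras into a single part reduces to $\abs{P}=2k-1$), use Step~1 to obtain $\{a,b\}$ and apply the inductive hypothesis to $P\setminus\{a,b\}$, a set of $2(k-1)-1$ points, to get disjoint $R_1,\dots,R_{k-1}$ partitioning $P\setminus\{a,b\}$ whose convex hulls share a common point $m$. By the covering property every point of $P\setminus\{a,b\}$ lies in $\conv\{a,b\}$, hence $R_i\subset\conv\{a,b\}$ and $\conv R_i\subset\conv\{a,b\}$ for every $i$; in particular $m\in\conv\{a,b\}$. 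Therefore $R_1,\dots,R_{k-1},\{a,b\}$ partitions $P$ into $k$ parts all of whose convex hulls contain~$m$.

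\textbf{Main obstacle.} The real content is in Step~1 --- specifically the \ref{r3_transf} verification that transferring the base of the covering pair from $a'$ to $p$ preserves the covering property for every other point of $P$. Once this is in hand, Step~2 reduces to the tautology $\conv R_i\subset\conv\{a,b\}$.
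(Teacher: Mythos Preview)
Your Step~1 is correct and essentially identical to the paper's covering-pair claim, proved the same way via \ref{r3_excl} and \ref{r3_transf}.

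The gap is in Step~2. You read ``$\{a,b\}\in\F_p$'' as ``$p\in\conv\{a,b\}$'', but that is not what the nerve statement says. By construction $\F_p\in\Nv$ and $\{p\},\{a,b\}\in\F_p$ only guarantee that $\conv\{p\}\cap\conv\{a,b\}$ (indeed the full intersection $\bigcap_{S\in\F_p}\conv S$) is nonempty; it does \emph{not} force the common point to be $p$ itself. In the paper's definition of a convexity space singletons need not be convex, so $\conv\{p\}$ may properly contain $\{p\}$. Hence your chain ``$R_i\subset\conv\{a,b\}\Rightarrow\conv R_i\subset\conv\{a,b\}\Rightarrow m\in\conv\{a,b\}$'' is unjustified, and the induction does not close.

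The paper sidesteps this by strengthening the inductive statement: rather than merely asserting a Radon $(k{-}1)$-partition of $P\setminus\{a,b\}$, it asserts that some $\F_r$ with $r\in P\setminus\{a,b\}$ already contains $k-1$ pairwise disjoint subsets of $P\setminus\{a,b\}$. The covering property then gives $\{a,b\}\in\F_r$ directly, so this same $\F_r$ contains $k$ disjoint sets, and any point of $\bigcap_{S\in\F_r}\conv S$ witnesses the desired $k$-partition---no passage through ``$p\in\conv\{a,b\}$'' is needed. If you want to keep your geometric version of Step~2, you must add the hypothesis that singletons are convex (a common axiom, but one this paper does not impose); otherwise, carry the induction entirely in the nerve language as the paper does.
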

\begin{proof}
Suppose $\abs{P}=2(k-1)+1$. We shall show that
one of $\F_p$ contains $k$ pairwise disjoint sets.
We claim that there is a pair of elements $p,q\in P$
so that $\{p,q\}\in \F_r$ for every $r\neq p,q$.
Indeed, it is true if $\abs{P}\leq 3$. If $\abs{P}\geq 4$,
and $s$ is any element of $\F_p$, then by induction there 
is a $p,q\in P\setminus\{s\}$ so that $\{p,q\}\in \F_r$ for
every $r\neq p,q,s$. If in addition $\{p,q\}\in \F_s$, then
we are done. Otherwise by property \ref{r3_excl} either 
$\{p,s\}\in\F_q$ or $\{q,s\}\in \F_p$. Say $\{p,s\}\in\F_q$.
Then by property \ref{r3_transf} applied to $\{p,q\}\in \F_r$
and either $\{p,s\}\in \F_q$ we conclude that $\{p,s\}$ is
in every $\F_r$, $r\neq p,s$. The claim is proved.

Let $p,q$ be a pair of element so that $\{p,q\}\in \F_r$ for $r\neq p,q$. 
By the induction hypothesis applied to $P\setminus\{p,q\}$ there is $r\in\{p,q\}$
so that $\F_r$ contains $k-1$ disjoint sets that are also disjoint
from $\{p,q\}$. Together with $\{p,q\}$ these form
a desired family of disjoint sets.
\end{proof}

\bibliographystyle{alpha}
\bibliography{eckhoff}

\end{document}